\title{Inventory problems and the parametric measure $m_{\lambda}$}
\author{Irina Georgescu$^\ast$ \\ \footnotesize Bucharest University of Economics\\ \footnotesize Department of Informatics and Economic Cybernetics\\ \footnotesize Pia$\c{t}$a Romana No 6  R 70167, Oficiul Postal 22, Bucharest, Romania\\
 \footnotesize Email: irina.georgescu@csie.ase.ro\\
 \footnotesize $^\ast$ Corresponding author  }
\date{}
\begin{document}
\maketitle

\begin{abstract}
The credibility theory was introduced by B. Liu as a new way to describe the fuzzy uncertainty. The credibility measure is the fundamental notion of the credibility theory.
Recently, L.Yang and K. Iwamura extended the credibility measure by defining the parametric measure $m_{\lambda}$ ($\lambda$ is a real parameter in the interval $[0,1]$
and for $\lambda= 1/2$ we obtain as a particular case the notion of credibility measure).

By using the $m_{\lambda}$- measure,  we studied in this paper a risk neutral multi-item inventory problem.
Our construction generalizes the credibilistic inventory model developed by Y. Li and Y. Liu in 2019.
In our model, the components of demand vector are fuzzy variables and the maximization problem is formulated by using the notion of $m_{\lambda}$--expected value.

We shall prove a general formula for the solution of optimization problem, from which we obtained
effective formulas for computing the optimal solutions in the particular cases where the demands are trapezoidal and triangular fuzzy numbers.
For $\lambda=1/2$ we obtain as a particular case the computation formulas of the optimal solutions of the credibilistic inventory problem of Li and Liu. These computation formulas are applied for some $m_{\lambda}$- models obtained from numerical data.

\end{abstract}

\textbf{Keywords}: fuzzy variables, demand vectors, $m_{\lambda}$--measure, $m_{\lambda}$--inventory problem.

\newtheorem{definitie}{Definition}[section]
\newtheorem{propozitie}[definitie]{Proposition}
\newtheorem{remarca}[definitie]{Remark}
\newtheorem{exemplu}[definitie]{Example}
\newtheorem{intrebare}[definitie]{Open question}
\newtheorem{lema}[definitie]{Lemma}
\newtheorem{teorema}[definitie]{Theorem}
\newtheorem{corolar}[definitie]{Corollary}

\newenvironment{proof}{\noindent\textbf{Proof.}}{\hfill\rule{2mm}{2mm}\vspace*{5mm}}

\section{Introduction}

Let's consider a company that produces several types of goods (items). It will be assumed that buyers can order in advance. An inventory problem is a mathematical model that describes the management of this company.

There are mathematical models in which the management activity of the company is carried out in a single period and models with several periods.  The inventory models can also be classified according to the attitude towards risk of a decision-maker: there are models in which the decision maker has a risk-averse attitude and models in which his attitude is neutral.

The mathematical formulation of the inventory model starts from the following initial data (model parameters) : $c_1, \ldots, c_n$ are unit fixed costs per inventoried item, $d_1, \ldots, d_n$ are unit revenues per inventoried item and $h_1, \ldots, h_n$
are unit holding costs per inventoried item.

The demands are mathematically modeled by the variables $D_1, \cdots, D_n$; the order quantities will be the variables $x_1, \cdots, x_n$.
The total profit from the sale of the $n$ types of goods will have the following expression::
$\pi(\vec{x}, \vec{D}) =\displaystyle \sum_{i=1}^n (d_i x_i-c_i -\frac{h_i x_i^2}{2 D_i})$ (see [23], [18]).

In a neutral inventory problem, one will determine those values of $x_1,\cdots,x_n$ for which the total profit $\pi(\vec{x}, \vec{D}) =\displaystyle \sum_{i=1}^n (d_i x_i-c_i -\frac{h_i x_i^2}{2 D_i})$ is maximal. When making the decision the risk is taken into account, the values of $x_1,\cdots,x_n$ will be determined so that at the same time the maximum profit is achieved, and the risk (represented by various mathematical concepts) to be minimal.

The formulation of an inventory problem depends on how the demands $D_1,\cdots,D_n$ are modeled, as well as on how profit maximization and risk minimization are evaluated. The classical treatment of inventory problems is a probabilistic one: the demands $D_1, \cdots, D_n$ are random variables and, for risk - neutral models,
the objective function of the maximization problem is the expected value of the total profit. In the case of a risk - averse attitude of the decision maker, several ways to describe the risk were proposed: in \cite{chen}, \cite{choi} by means of mean-variance models and in \cite{luciano} by using the value-at-risk (VaR) as a risk measure.
In \cite{ahmed}, the coherent risk measures \cite{artzner} have been used in defining the objective function of an inventory problem. Using  the multi-item inventory system introduced by Luciano et al. \cite{luciano} (called, shortly, LCP-model), \cite{borgonovo} developed several inventory problems, with decision-makers having various positions towards risk: from a neutral attitude to risk-averse attitude,
corresponding to variance, mean-absolute deviation (MAD) and conditional value-at-risk (CVaR) as risk measures.

The credibility theory, specially developed by Liu in \cite{liu1}, is another way to model the fuzzy uncertainty. Its fundamental concept is the credibilistic measure \cite{liu2}
and its main indicators are the credibilistic expected value and the credibilistic variance (cf. \cite{liu1}, \cite{liu2}).
From the literature dedicated to the credibilistic treatment of inventory problems we mention the papers: \cite{ghasemyYaghin}, \cite{guo1}, \cite{kumar}, \cite{mittal}. In this paper we will have as the starting point the papers \cite{li1}, \cite{li2}, \cite{li3} of Li and  Liu:
the first one concerns a multi - item inventory problem in which the decision - maker is neutral and the second one
is a risk - averse inventory model. In both papers, the demands and the total profits are fuzzy variables and the expected profit is
 the credibilistic expected value of total profit. In \cite{li2} appears a risk evaluated by the notion of absolute semi - deviation.

In \cite{YI}, Yang and Iwamura introduced a new measure $m_{\lambda}$ as a convex linear combination of a possibility measure $Pos$ and
its associated necessity measure $Nec$ ($\lambda$ is a parameter in the interval $[0,1]$). By using the measure $m_{\lambda}$, in \cite{dzuche1}
the notions of the expected value $E_{\lambda}(\xi)$ and the variance $Var_{\lambda}(\xi)$ of a fuzzy variable $\xi$ are defined. These two indicators retain
some algebraic properties of the possibilistic indicators corresponding to \cite{liu1}. In this way, the credibility theory is enlarged
to a new theory that models the fuzzy uncertainty (this will be named $m_{\lambda}$ - theory). An issue that arises  naturally is an $m_{\lambda}$--theory leading to the development of different economic and financial themes.
Papers \cite{kinnunen}, \cite{kinnunen1}, \cite{georgescu2},  \cite{georgescu3} introduce new credibilistic real options models, which are based on the optimism–-pessimism measure and interval--valued fuzzy numbers. The model outcomes are compared to the original credibilistic real options model through a numerical case example in a merger and acquisition context.
Paper \cite{dzuche1} applies $m_{\lambda}$--theory in the study of optimal portfolios when assets returns are described by triangular or trapezoidal fuzzy variables.

In this paper we shall study a multi - item risk neutral inventory problem in the framework of an $m_{\lambda}$ - theory. We shall assume that the demands $D_1,\cdots,D_n$ are fuzzy variables and the criterion used in determination of the order quantities $x_1,\cdots,x_n$ is the maximization of the $m_{\lambda}$ - expected value $\displaystyle \sum_{i=1}^n [d_i x_i-c_i -\frac{h_i x_i^2}{2 }E_{\lambda}(\frac{1}{D_i})]$ of the total profit.
We shall prove a general formula for computing the solution of optimization problem, of which we will then get formulas for effective
computation of inventory problem solution whenever the demands are trapezoidal or triangular fuzzy numbers. For $\lambda = \frac{1}{2}$ we shall obtain as a particular case
the credibilistic inventory problem of \cite{li1}, as well as the form of its solution.

The paper is structured as follows. Section $2$ contains introductory material on possibility and necessity measures, credibility measure and $m_{\lambda}$ - measure, as well as on their relationship. In Section $3$ we present the
definition of the $m_{\lambda}$ - expected value and some of its basic properties. Section $4$ deals with the construction of a risk neutral inventory model whose objective function is defined  by
using the notion of $m_{\lambda}$ - expected value.
By using the linearity of $m_{\lambda}$--expected operator $E_{\lambda}(.)$, a general formula for the solution of the maximization problem is obtained.
In Section $5$ we proved some explicit formulas for this solution in the particular cases when the demands are trapezoidal and triangular fuzzy numbers. The proofs of these
formulas are based on the form of $m_{\lambda}$--expected value $E_{\lambda}(A)$ of a trapezoidal fuzzy number $A$ (see Proposition 3.4).  Section $6$  highlights how
by applying the percentile method of Vercher et al. \cite{vercher} we can build an inventory problem starting from a dataset. In this inventory problem, the components of a demand vector
are trapezoidal fuzzy numbers, such that one can apply the formulas from Section $4$ to compute the solution of the optimization problem.


\section{Preliminaries}

Let $X$ be a universe whose elements can be individuals, objects, states, alternatives, etc. An events $A$ is a subset of $X$: the set of events will be the family $P(X)$ of
subsets of $X$. The complement of the event $A$ will be denoted by $A^c$.

In this paper we shall assume that the elements of the universe $X$ are real numbers ($X\subseteq \mathbb{R}$). A fuzzy variable will be an arbitrary function $\xi:X\rightarrow \mathbb{R}$.

The notions of possibility measure and necessity measure can be introduced both axiomatically and through  a possibility distribution (cf. \cite{zadeh}, \cite{dubois}, \cite{georgescu}).

A possibility measure on $X$ is a function $Pos:\mathcal{P}(X) \rightarrow [0,1]$ such that

($Pos1$) $Pos(\emptyset) = 0$; $Pos(X) = 1$;

($Pos2$) $Pos(\bigcup_{i\in I}A_i) = sup_{i\in I}Pos(A_i)$, for any family $(A_i)_{i\in I}$ of events.

A necessity measure on $X$ is a function $Nec:\mathcal{P}(X)\rightarrow [0,1]$ such that

($Nec1$) $Nec(\emptyset) = 0$; $Nec(X) = 1$;

($Nec2$) $Nec(\bigcap_{i\in I}A_i) = inf_{i\in I}Nec(A_i)$, for any family $(A_i)_{i\in I}$ of events.

The notions of possibility measure and necessity measure are dual: to each possibility measure $Pos$ one can assign a necessity measure $Nec(A) = 1 - Pos(A^c)$ and, vice-versa, to each necessity measure $Nec$ one can assign a possibility measure $Pos(A) = 1 - Nec(A^c)$.

Given a possibility measure $Pos$ on the universe $X$, for any parameter $\lambda\in [0, 1]$ consider the function $m_{\lambda}:P(X)\rightarrow [0,1]$ defined by

(2.1) $m_{\lambda}(A) = \lambda Pos(A) + (1-\lambda) Nec(A)$, for any event $A$;

($Nec$ is here the necessity measure associated with $Pos$).

This new measure was introduced by Yang and Iwamura in \cite{YI} as a convex linear combination of $Pos$ and $Nec$ by means of the weight $\lambda$. If $\lambda = \frac{1}{2}$ then one obtains the notion of credibility measure in the sense of Liu's monograph \cite{liu1}:

(2.2) $Cred(A) = \frac{1}{2}(Pos(A) + Nec(A))$, for any event $A$.

A possibilistic distribution on $X$ is a function $\mu:X\rightarrow [0,1]$ such that $sup_{x\in X} = 1$; $\mu$ is normalized if $\mu(x) = 1$ for some $x\in X$.

Let us fix a possibility distribution $\mu:X\rightarrow [0,1]$. Then one can associate with $\mu$ a possibility measure $Pos$ and a necessity measure $Nec$ by taking

(2.3) $Pos(A) = sup_{x\in A}\mu(x)$, for any event $A$;

(2.4) $Nec(A) = inf_{x\in A}\mu(x)$, for any event $A$.

Then for each parameter $\lambda\in [0,1]$, the measure $m_{\lambda}$ defined by (2.1) will have the following form

(2.5) $m_{\lambda}(A) = \lambda sup_{x\in A}\mu(x) + (1-\lambda) inf_{x\in A}\mu(x)$, for any event $A$.

According to \cite{liu1}, we say that the normalized possibility distribution $\mu$ is the membership function associated with a fuzzy variable $\xi$ if for any event $A$ we have

(2.6) $Pos(\xi\in A) = sup_{x\in A}\mu(x)$.

Then the following equalities hold:

(2.7) $Nec(\xi\in A) = inf_{x\in A}\mu(x)$;

(2.8) $m_{\lambda}(\xi\in A) = \lambda sup_{x\in A}\mu(x) + (1-\lambda)inf_{x\in A}\mu(x)$.

\section{The expected value associated with the measure $m_{\lambda}$}

We fix a parameter $\lambda\in [0, 1]$ and assume that $\xi$ is a fuzzy variable, $\mu$ is its membership function and $m_{\lambda}$ is the measure defined in (2.5).

Following \cite{dzuche1}, the expected value of $\xi$ w.r.t. the measure $m_{\lambda}$ is defined by

(3.1) $E_{\lambda}(\xi)$ = $\int^0_{-\infty}[ m_{\lambda}(\xi\geq r) - 1]dr + \int^{\infty}_0 m_{\lambda}(\xi\geq r)dr$.

If $\lambda = \frac{1}{2}$ then one obtains the credibilistic expected value of $\xi$ w.r.t. the credibility measure $Cr$ defined in $(2.2)$:

(3.2) $E_C(\xi)$ = $\int^{\infty}_0 Cr(\xi\geq r)dr - \int_{-\infty}^0Cr(\xi\leq r)dr$.

The previous notion of credibilistic expected value was introduced by Liu and Liu in \cite{liu2}.

The following result shows that the expected operator $E_{\lambda}(\cdot)$ is linear.

\begin{propozitie} \cite{dzuche1} Let $\xi_1,\xi_2$ be two fuzzy variables such that $E_{\lambda}(\xi_1)< \infty$, $E_{\lambda}(\xi_2)< \infty$ and $\alpha,\beta$ are two non - negative real numbers. Then the following hold:

(3.3) $E_{\lambda}(\xi_1 + \xi_2)$ = $E_{\lambda}(\xi_1) + E_{\lambda}(\xi_2)$;

(3.4) $E_{\lambda}(\alpha \xi_1) = \alpha E_{\lambda}(\xi_1)$.

\end{propozitie}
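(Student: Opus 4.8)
The plan is to prove the homogeneity relation (3.4) first, since it follows directly from a change of variables in the defining integral (3.1), and then to reduce the additivity relation (3.3) to the additivity of two auxiliary functionals attached separately to $Pos$ and $Nec$. For (3.4), fix $\alpha>0$ (the case $\alpha=0$ being immediate, since $E_{\lambda}(0)=0$). Because $\{\alpha\xi_1\geq r\}=\{\xi_1\geq r/\alpha\}$ for every real $r$, I would write
$$E_{\lambda}(\alpha\xi_1)=\int_{-\infty}^0[m_{\lambda}(\xi_1\geq r/\alpha)-1]\,dr+\int_0^{\infty}m_{\lambda}(\xi_1\geq r/\alpha)\,dr,$$
and then substitute $r=\alpha s$, $dr=\alpha\,ds$. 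This substitution maps each of the ranges $(-\infty,0]$ and $[0,\infty)$ onto itself and factors out the constant $\alpha$, yielding exactly $\alpha E_{\lambda}(\xi_1)$.

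For (3.3) the first step is to separate the two ingredients of $m_{\lambda}$. Using $m_{\lambda}(\xi\geq r)=\lambda\,Pos(\xi\geq r)+(1-\lambda)\,Nec(\xi\geq r)$ together with $\lambda+(1-\lambda)=1$, the integrand of (3.1) splits as $\lambda[Pos(\xi\geq r)-1]+(1-\lambda)[Nec(\xi\geq r)-1]$ on $(-\infty,0]$ and as $\lambda\,Pos(\xi\geq r)+(1-\lambda)\,Nec(\xi\geq r)$ on $[0,\infty)$. By linearity of the Riemann integral this produces the decomposition $E_{\lambda}(\xi)=\lambda\,E_{Pos}(\xi)+(1-\lambda)\,E_{Nec}(\xi)$, where $E_{Pos}$ and $E_{Nec}$ are defined by the same formula (3.1) with $m_{\lambda}$ replaced by $Pos$ and by $Nec$ respectively. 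Hence it suffices to prove that each of $E_{Pos}$ and $E_{Nec}$ is additive; the identity (3.3) then follows by taking the convex combination with weights $\lambda$ and $1-\lambda$, and the same convex-combination argument recovers (3.4) piecewise as well.

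To treat $E_{Pos}$ and $E_{Nec}$, I would pass to the level-set (``layer-cake'') description. For a fuzzy variable with membership function $\mu$ and $\alpha$-cuts $[\underline{\xi}(\alpha),\overline{\xi}(\alpha)]$, one has $Pos(\xi\geq r)=\sup_{x\geq r}\mu(x)$, so the condition $Pos(\xi\geq r)\geq\alpha$ is equivalent to $\overline{\xi}(\alpha)\geq r$; integrating first in $r$ and then in $\alpha$, and reassembling the contributions from the two half-lines, gives $E_{Pos}(\xi)=\int_0^1\overline{\xi}(\alpha)\,d\alpha$, and a dual computation gives $E_{Nec}(\xi)=\int_0^1\underline{\xi}(\alpha)\,d\alpha$. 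Additivity of $E_{Pos}$ and $E_{Nec}$ is then immediate, since fuzzy addition acts by interval arithmetic on $\alpha$-cuts, i.e. $\overline{\xi_1+\xi_2}(\alpha)=\overline{\xi_1}(\alpha)+\overline{\xi_2}(\alpha)$ and $\underline{\xi_1+\xi_2}(\alpha)=\underline{\xi_1}(\alpha)+\underline{\xi_2}(\alpha)$. As a consistency check, at $\lambda=\frac{1}{2}$ one recovers $E_C(\xi)=\frac{1}{2}\int_0^1(\underline{\xi}(\alpha)+\overline{\xi}(\alpha))\,d\alpha$, the known credibilistic mean.

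I expect the main obstacle to lie precisely in this last reduction, because $m_{\lambda}$ is \emph{not} additive as a set function, so one cannot pass the sum through the measure directly; the whole argument hinges on correctly relating the level sets $\{\xi_1+\xi_2\geq r\}$ to those of the two summands. For trapezoidal and triangular fuzzy numbers (the cases used later in the paper) this is transparent through the $\alpha$-cut representation above, but for arbitrary fuzzy variables it requires working on a joint possibility space and verifying that the sup–min convolution defining $\xi_1+\xi_2$ is compatible with the integral decomposition, following the scheme used by Liu and Liu for the credibility case $\lambda=\frac{1}{2}$.
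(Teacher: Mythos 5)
The paper itself never proves this proposition; it is imported verbatim from Dzouche et al.\ \cite{dzuche1}, so your proposal has to be judged on its own merits rather than against an internal argument. The parts you actually carry out are correct: homogeneity (3.4) via $\{\alpha\xi_1\geq r\}=\{\xi_1\geq r/\alpha\}$ and the substitution $r=\alpha s$ is fine; the decomposition $E_{\lambda}=\lambda E_{Pos}+(1-\lambda)E_{Nec}$ is a legitimate use of linearity of the integral (for $\lambda\in(0,1)$ the two integrands have fixed sign on each half-line, so finiteness of $E_{\lambda}$ forces finiteness of both pieces); and the level-cut identities $E_{Pos}(\xi)=\int_0^1\overline{\xi}(\alpha)\,d\alpha$ and $E_{Nec}(\xi)=\int_0^1\underline{\xi}(\alpha)\,d\alpha$ are correct and consistent with Proposition 3.4 of the paper (they give $\lambda\frac{r_3+r_4}{2}+(1-\lambda)\frac{r_1+r_2}{2}$ for a trapezoidal variable).

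The genuine gap is exactly the step you flag as ``the main obstacle'' and then defer: the cut arithmetic $\overline{(\xi_1+\xi_2)}(\alpha)=\overline{\xi_1}(\alpha)+\overline{\xi_2}(\alpha)$. This is not a technicality one can patch afterwards, because for fuzzy variables as this paper defines them (arbitrary functions on a possibility space, added pointwise) that identity is false, and so is (3.3) itself, unless an independence-type hypothesis is added. Concretely: take $X=\{x_1,x_2\}$ with $\mu(x_1)=\mu(x_2)=1$, and set $\xi_1(x_1)=0$, $\xi_1(x_2)=1$, $\xi_2(x_1)=1$, $\xi_2(x_2)=0$ (both have normalized membership functions, so they are admissible fuzzy variables). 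Then $\xi_1+\xi_2\equiv 1$, while for $0<r\leq 1$ one has $Pos(\xi_i\geq r)=1$ and $Nec(\xi_i\geq r)=0$, hence $E_{\lambda}(\xi_1)=E_{\lambda}(\xi_2)=\lambda$ but $E_{\lambda}(\xi_1+\xi_2)=1\neq 2\lambda$ whenever $\lambda\neq\frac{1}{2}$. The $\alpha$-cut additivity you invoke holds only when the sum is interpreted through Zadeh's extension principle (sup--min convolution), equivalently when the joint possibility distribution of $(\xi_1,\xi_2)$ is the minimum of the marginals (min-based independence) and the cuts behave well (compact intervals, as for trapezoidal or triangular numbers); this is the hypothesis under which Liu and Liu \cite{liu2} prove the credibilistic case $\lambda=\frac{1}{2}$, and it must be read into the statement for it to be true at all. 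So your proof establishes (3.4), but for (3.3) it stops precisely where the real work lies, and no argument can close that gap without strengthening the hypotheses of the proposition.
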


\begin{lema} If $\xi > 0$ then $E_{\lambda}(\xi)$ = $ \int^{\infty}_0 m_{\lambda}(\xi \geq r)dr$ and $E_{\lambda}(\xi)> 0$.

\end{lema}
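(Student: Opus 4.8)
The plan is to start from the defining formula (3.1) and to exploit the hypothesis $\xi>0$ in order to kill the integral over the negative half-line, leaving only the second term. First I would fix an arbitrary $r<0$ and observe that, since every value of $\xi$ is strictly positive, the event $\{\xi\ge r\}$ coincides with the sure event $X$. Hence, by $(Pos1)$ and $(Nec1)$ together with (2.1), we get $m_{\lambda}(\xi\ge r)=\lambda\,Pos(X)+(1-\lambda)\,Nec(X)=\lambda+(1-\lambda)=1$ for every $r<0$. Consequently the first integrand in (3.1), namely $m_{\lambda}(\xi\ge r)-1$, is identically zero on $(-\infty,0)$, so the first integral vanishes and (3.1) collapses to $E_{\lambda}(\xi)=\int^{\infty}_0 m_{\lambda}(\xi\ge r)\,dr$, which is the first assertion.

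For the strict positivity I would use the normalization of the membership function $\mu$. Choose $x_0$ with $\mu(x_0)=1$; since $\xi>0$ this point satisfies $x_0>0$. For every $r$ with $0<r\le x_0$ the point $x_0$ lies in $\{\xi\ge r\}$, so by (2.6) we have $Pos(\xi\ge r)=\sup_{x\ge r}\mu(x)\ge\mu(x_0)=1$, i.e. $Pos(\xi\ge r)=1$ on the whole interval $(0,x_0]$. Writing $m_{\lambda}(\xi\ge r)=\lambda\,Pos(\xi\ge r)+(1-\lambda)\,Nec(\xi\ge r)$ and using $Nec(\xi\ge r)\ge 0$, I obtain $m_{\lambda}(\xi\ge r)\ge\lambda$ on $(0,x_0]$. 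Integrating the formula already established then gives $E_{\lambda}(\xi)\ge\int^{x_0}_0\lambda\,dr=\lambda x_0>0$ as soon as $\lambda>0$.

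The delicate point, and the one I expect to be the main obstacle, is the boundary case $\lambda=0$, where $m_0=Nec$ and the estimate above degenerates to $E_0(\xi)\ge 0$. Here strict positivity cannot be read off from the possibility part and must instead be extracted from $Nec(\xi\ge r)=1-Pos(\xi<r)=1-\sup_{x<r}\mu(x)$, which one needs to keep positive on a set of positive Lebesgue measure. This is guaranteed whenever the support of $\mu$ is bounded away from the origin, say $\mu(x)=0$ for $x\le a$ with $a>0$: then $\sup_{x<r}\mu(x)=0$, hence $Nec(\xi\ge r)=1$ for all $0<r\le a$, so that $E_0(\xi)\ge\int^a_0 1\,dr=a>0$. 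Since the trapezoidal and triangular fuzzy numbers treated in Section 5 all have support inside $(0,\infty)$, this condition is met in every case of interest, and the positivity conclusion then holds for the full range $\lambda\in[0,1]$. I would therefore either record the mild assumption $a>0$ in the statement or verify it directly for the concrete membership functions used in the sequel.
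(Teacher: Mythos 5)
The paper offers no proof of this lemma at all --- it is stated bare and, unlike Lemma 3.3 and Proposition 3.4, carries no citation --- so there is no internal argument to measure your proposal against; it must be judged on its own, and on its own it is correct and in fact more careful than the statement it proves. Your first part is the expected argument: for $r<0$ the hypothesis $\xi>0$ makes $\{\xi\ge r\}$ the sure event, so by the axioms $(Pos1)$ and $(Nec1)$ the integrand $m_{\lambda}(\xi\ge r)-1$ vanishes on $(-\infty,0)$ and (3.1) collapses to the half-line integral. (You were also right to work from the axioms and the duality $Nec(A)=1-Pos(A^c)$, i.e. $Nec(\xi\ge r)=1-\sup_{x<r}\mu(x)$, rather than from the paper's displayed formulas (2.4)/(2.7); the expression $\inf_{x\in A}\mu(x)$ there is a typo, and your reading is the one consistent with the trapezoidal computations in Lemma 3.3 and Proposition 5.1.) Your bound $E_{\lambda}(\xi)\ge\lambda x_0>0$ from a modal value $x_0>0$ settles strict positivity for every $\lambda>0$.

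Your reservation about $\lambda=0$ is not a weakness of your proof but a genuine defect of the lemma as stated: take $\mu=1$ on $(0,1]$ and $\mu=0$ elsewhere, a normalized membership function of a fuzzy variable with $\xi>0$; then $Nec(\xi\ge r)=1-\sup_{x<r}\mu(x)=0$ for every $r>0$, hence $E_{0}(\xi)=0$ and the strict inequality fails. So the second assertion really does need either $\lambda>0$ or your extra hypothesis that the support of $\mu$ stays away from the origin. Your repair is the natural one and covers every use the paper makes of the lemma: the positivity of $E_{\lambda}(\frac{1}{D_i})$ is what licenses the division in (4.8), and in Section 5 the demands are trapezoidal with $0<r_1$, so $\frac{1}{D}$ has support inside $[\frac{1}{r_4},\frac{1}{r_1}]$, where both your $\lambda>0$ bound and your $\lambda=0$ patch apply.
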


According to \cite{liu1}, p.73, a trapezoidal fuzzy variable ( = trapezoidal fuzzy number) $\xi = (r_1,r_2,r_3,r_4)$, with $r_1\leq r_2\leq r_3\leq r_4$, is defined by the following membership function:

(3.5)    $\mu_\xi(x) = \left\{
   \begin{array}{ll}
     \frac{x-r_1}{r_2-r_1} & \quad r_1 \leq x \leq r_2, \\
     1 & \quad r_2 \leq x \leq r_3,\\
     \frac{x-r_3}{r_4-r_3} & r_3 \leq x \leq r_4 ,\\
     0  & \quad otherwise.
   \end{array}
  \right.$

If $r_2 = r_3$ then one obtains the triangular fuzzy number $\xi = (r_1,r_2,r_4)$.

\begin{lema}
\cite{dzuche1} For any trapezoidal fuzzy variable $\xi = (r_1,r_2,r_3,r_4)$ we have:

(3.6)  $m_\lambda(\xi \leq x)=\left\{
   \begin{array}{lllll}
     1 & \quad r_4 \leq x,\\
     \frac{\lambda(r_4-x)+x-r_1}{r_4-r_3} & \quad r_3 \leq x \leq r_4, \\
     \lambda & \quad r_2 \leq x \leq r_3,\\
     \frac{\lambda(x-r_1)}{r_2-r_1} & r_1 \leq x \leq r_2 ,\\
     0  & \quad x \leq r_1.
   \end{array}
  \right.$

\end{lema}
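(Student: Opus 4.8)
The plan is to unwind the definition (2.1) of $m_\lambda$ on the event $\{\xi \le x\}$ and to compute the possibility and the necessity of this event separately, as functions of the real parameter $x$, from the explicit membership function (3.5) of $\xi = (r_1,r_2,r_3,r_4)$. By (2.1) together with (2.6),
\[
m_\lambda(\xi \le x) = \lambda\,Pos(\xi \le x) + (1-\lambda)\,Nec(\xi \le x), \qquad Pos(\xi \le x) = \sup_{y \le x}\mu_\xi(y),
\]
and for the necessity I would use the duality relation $Nec(A)=1-Pos(A^c)$ recorded in Section 2, which gives $Nec(\xi \le x) = 1 - Pos(\xi > x) = 1 - \sup_{y > x}\mu_\xi(y)$. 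In this way the whole statement reduces to evaluating two suprema of the piecewise-linear function $\mu_\xi$ over the complementary half-lines $(-\infty,x]$ and $(x,\infty)$.

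First I would compute $Pos(\xi \le x)=\sup_{y\le x}\mu_\xi(y)$, which is just the running maximum of $\mu_\xi$. Since $\mu_\xi$ vanishes on $(-\infty,r_1]$, rises linearly from $0$ to $1$ on $[r_1,r_2]$, equals $1$ on $[r_2,r_3]$, and falls back to $0$ along the branch $\tfrac{r_4-x}{r_4-r_3}$ on $[r_3,r_4]$, this supremum equals $0$ for $x\le r_1$, equals $\mu_\xi(x)=\tfrac{x-r_1}{r_2-r_1}$ for $r_1\le x\le r_2$, and equals $1$ for $x\ge r_2$. Symmetrically, $\sup_{y>x}\mu_\xi(y)$ equals $1$ for $x\le r_3$, equals the value of the decreasing branch $\tfrac{r_4-x}{r_4-r_3}$ for $r_3\le x\le r_4$ (continuity of $\mu_\xi$ lets me replace the supremum over the open set $(x,\infty)$ by the boundary value $\mu_\xi(x)$), and equals $0$ for $x\ge r_4$. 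Subtracting from $1$ yields $Nec(\xi\le x)$: namely $0$ for $x\le r_3$, then $\tfrac{x-r_3}{r_4-r_3}$ on $[r_3,r_4]$, then $1$ for $x\ge r_4$.

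Finally I would substitute these two piecewise expressions into the convex combination and simplify on each of the five regions cut out by $r_1\le r_2\le r_3\le r_4$. For $x\le r_1$ both terms vanish; on $[r_1,r_2]$ only the possibility contributes, giving $\tfrac{\lambda(x-r_1)}{r_2-r_1}$; on $[r_2,r_3]$ the possibility is $1$ and the necessity is $0$, giving $\lambda$; on $[r_3,r_4]$ the possibility is $1$ and the necessity is $\tfrac{x-r_3}{r_4-r_3}$, so that $m_\lambda=\lambda+(1-\lambda)\tfrac{x-r_3}{r_4-r_3}$, which rearranges to $\tfrac{\lambda(r_4-x)+x-r_3}{r_4-r_3}$, the second branch of (3.6); and for $x\ge r_4$ both measures equal $1$, giving $1$. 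This reproduces (3.6).

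I expect the only real obstacle to be bookkeeping rather than anything conceptual: one must keep the two families of cases (three pieces for $Pos$, three for $Nec$) consistent at the breakpoints $r_1,r_2,r_3,r_4$, and — crucially — compute the necessity through the complement $\{\xi>x\}$ rather than as an infimum of $\mu_\xi$ over $\{\xi\le x\}$, since the latter would vanish identically and would never produce the nontrivial right branch $\tfrac{\lambda(r_4-x)+x-r_3}{r_4-r_3}$. A convenient sanity check is agreement of adjacent pieces at the breakpoints; in particular both the third and fourth branches give $\lambda$ at $x=r_3$ and the fourth and fifth branches give $1$ at $x=r_4$, which also pins down the numerator constant as $r_3$. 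Specializing to $\lambda=\tfrac12$ then recovers the familiar credibilistic distribution of a trapezoidal number.
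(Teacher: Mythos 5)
Your proposal is correct and complete, and there is nothing in the paper to compare it against: Lemma 3.3 is quoted from \cite{dzuche1} without any proof, so yours is the only derivation on the table. The route you take --- splitting $m_\lambda(\xi\le x)$ as $\lambda\,Pos(\xi\le x)+(1-\lambda)\,Nec(\xi\le x)$, computing the possibility as the running maximum of $\mu_\xi$, and computing the necessity through the complement via $Nec(A)=1-Pos(A^c)$ --- is the natural one, and your insistence on the duality rather than the literal formula (2.4) is essential: as you observe, $\inf_{y\le x}\mu_\xi(y)$ vanishes identically, so (2.4) and (2.7) as printed cannot be taken at face value (with them, $m_\lambda(\xi\le x)$ would never exceed $\lambda$). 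Your derivation also exposes a misprint in the statement itself: on $[r_3,r_4]$ the convex combination gives $\frac{\lambda(r_4-x)+x-r_3}{r_4-r_3}$, not $\frac{\lambda(r_4-x)+x-r_1}{r_4-r_3}$ as displayed in (3.6). Your breakpoint check (value $\lambda$ at $x=r_3$, value $1$ at $x=r_4$) pins down $r_3$ as the correct constant, and the corrected form is exactly the one the paper itself uses later, in the proof of Proposition 5.1, when it evaluates $m_\lambda(D\le \frac{1}{r})$. Likewise, you silently repaired the misprint in (3.5) by reading the right branch of $\mu_\xi$ as the decreasing function $\frac{r_4-x}{r_4-r_3}$ on $[r_3,r_4]$. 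In short: the argument is sound, every case is verified correctly, and what you prove is the corrected statement that the rest of the paper actually relies on.
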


\begin{propozitie}
\cite{dzuche1} For any trapezoidal fuzzy variable $\xi = (r_1,r_2,r_3,r_4)$ the expected value $E_{\lambda}(\xi)$ has the form

(3.7) $E_{\lambda}(\xi)$ = $(1-\lambda)\frac{r_1 + r_2}{2} + \lambda\frac{r_3 + r_4}{2}$.

\end{propozitie}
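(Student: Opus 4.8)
The plan is to evaluate the integral in the definition (3.1) by first producing an explicit piecewise formula for the tail distribution $r \mapsto m_{\lambda}(\xi \geq r)$, exactly parallel to the computation behind Lemma 3.2 but carried out for the event $\{\xi \geq r\}$ rather than $\{\xi \leq x\}$, and then integrating the resulting piecewise-linear function region by region. First I would read off $Pos$ and $Nec$ from the membership function (3.5): writing $Pos(\xi \geq r) = \sup_{x \geq r}\mu_\xi(x)$ and $Nec(\xi \geq r) = 1 - \sup_{x < r}\mu_\xi(x)$ and inspecting the four slopes of $\mu_\xi$, formula (2.8) yields
\[
m_{\lambda}(\xi \geq r) = \begin{cases}
1, & r \leq r_1, \\
\lambda + (1-\lambda)\frac{r_2 - r}{r_2 - r_1}, & r_1 \leq r \leq r_2, \\
\lambda, & r_2 \leq r \leq r_3, \\
\lambda\frac{r_4 - r}{r_4 - r_3}, & r_3 \leq r \leq r_4, \\
0, & r_4 \leq r.
\end{cases}
\]
This is the only step in which the specific trapezoidal shape enters, and it is the counterpart of Lemma 3.2 for the upper event.

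Second, I would dispose of the position of the origin relative to $r_1,\dots,r_4$, which is the one genuinely delicate point, since (3.1) splits the real line at $0$. The cleanest route is translation invariance. Regarding a constant $c$ as a degenerate fuzzy variable, a short direct check from (3.1) gives $E_{\lambda}(c) = c$ for every real $c$; combined with additivity (3.3) this yields $E_{\lambda}(\xi + c) = E_{\lambda}(\xi) + c$. Since $\xi + c = (r_1+c, r_2+c, r_3+c, r_4+c)$ is again trapezoidal and the right-hand side of (3.7) is affine in the $r_i$ with the same shift, it suffices to establish the formula after choosing $c$ large enough that $r_1 + c > 0$. In that case $\xi > 0$, so Lemma 3.2 applies and $E_{\lambda}(\xi) = \int_0^{\infty} m_{\lambda}(\xi \geq r)\,dr$, making the negative part of (3.1) vanish.

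Finally, I would substitute the displayed formula into $\int_0^{\infty} m_{\lambda}(\xi \geq r)\,dr$ and integrate over $[0,r_1]$, $[r_1,r_2]$, $[r_2,r_3]$, $[r_3,r_4]$ separately. The constant pieces contribute $r_1 + \lambda(r_2 - r_1) + \lambda(r_3 - r_2) = (1-\lambda)r_1 + \lambda r_3$, while the two linear ramps contribute $(1-\lambda)\frac{r_2 - r_1}{2}$ on $[r_1,r_2]$ and $\lambda\frac{r_4 - r_3}{2}$ on $[r_3,r_4]$. Adding these and pairing the coefficients of $1-\lambda$ and of $\lambda$ gives $(1-\lambda)\frac{r_1 + r_2}{2} + \lambda\frac{r_3 + r_4}{2}$, which is (3.7). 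The main obstacle is not this bookkeeping but getting the necessity part of $m_{\lambda}(\xi \geq r)$ correct on each region — the $Nec$ term is active precisely on the rising edge $[r_1,r_2]$ and the $Pos$ term decays only on $[r_3,r_4]$ — together with the reduction to the positive case via Lemma 3.2, which is what spares us a four-fold case analysis on where $0$ sits among the $r_i$.
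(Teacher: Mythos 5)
Your proof is correct, but note that the paper never proves Proposition 3.4 at all: the statement is imported from \cite{dzuche1} with a citation and no argument, so there is no internal proof to compare against. What you wrote is essentially the technique the paper itself deploys later for Proposition 5.1: write down the piecewise form of a tail function $m_{\lambda}(\cdot \geq r)$ and integrate it region by region via Lemma 3.2. Your piecewise formula for $m_{\lambda}(\xi \geq r)$ is right; it passes the self-duality check $m_{\lambda}(\xi \geq r) = 1 - m_{1-\lambda}(\xi < r)$ against Lemma 3.3, and it is worth pointing out that you correctly used the duality $Nec(A) = 1 - Pos(A^c)$ rather than the paper's formulas (2.4) and (2.7), which as printed ($Nec(A) = \inf_{x\in A}\mu(x)$) are typos inconsistent with the paper's own Lemma 3.3. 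The translation-invariance step is a genuine addition beyond the paper's toolkit: Lemma 3.2 only covers $\xi > 0$, and your reduction $E_{\lambda}(\xi + c) = E_{\lambda}(\xi) + c$ (justified by (3.3) together with the direct check $E_{\lambda}(c) = c$, or even more elementarily by a change of variable in (3.1), which avoids leaning on the unconditional additivity claim of Proposition 3.1) cleanly eliminates the case analysis on where $0$ sits among $r_1,\dots,r_4$, a point the paper silently ignores. The only caveat worth recording is the degenerate situation $r_1 = r_2$ or $r_3 = r_4$, where the ramp expressions have vanishing denominators; the corresponding intervals have zero length, so the integration and the final formula (3.7) are unaffected, but the piecewise tail formula should be read with that convention.
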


\begin{corolar}
For any triangular fuzzy variable $\xi = (r_1,r_2,r_4)$ the expected value $E_{\lambda}(\xi)$ has the form

(3.8) $E_{\lambda}(\xi)$ = $(1-\lambda)\frac{r_1}{2} + \frac{r_2}{2}+ \lambda \frac{r_4}{2}$.
\end{corolar}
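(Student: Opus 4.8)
The plan is to obtain the corollary as a direct specialization of Proposition 3.4. Recall from the remark immediately following (3.5) that a triangular fuzzy number $\xi = (r_1, r_2, r_4)$ is precisely the trapezoidal fuzzy number in which the two middle parameters coincide, that is, the case $r_3 = r_2$ of the trapezoidal variable $(r_1, r_2, r_3, r_4)$. Since Proposition 3.4 already supplies a closed form for $E_{\lambda}$ of an arbitrary trapezoidal fuzzy variable, all that remains is to substitute $r_3 = r_2$ into formula (3.7) and simplify.

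First I would write down (3.7) with $r_3$ replaced by $r_2$, giving
$$E_{\lambda}(\xi) = (1-\lambda)\frac{r_1 + r_2}{2} + \lambda\frac{r_2 + r_4}{2}.$$
Next I would collect the coefficient of $r_2$. The two contributions to $r_2$ are $(1-\lambda)\frac{r_2}{2}$ from the first term and $\lambda\frac{r_2}{2}$ from the second; their sum is $\frac{r_2}{2}\bigl[(1-\lambda) + \lambda\bigr] = \frac{r_2}{2}$, because the weights $\lambda$ and $1-\lambda$ add to one. The terms involving $r_1$ and $r_4$ carry over unchanged as $(1-\lambda)\frac{r_1}{2}$ and $\lambda\frac{r_4}{2}$ respectively, which yields exactly the asserted formula (3.8).

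Since the argument reduces to a one-line substitution followed by combining like terms, there is no genuine obstacle here; the only point worth flagging is that the cancellation of $\lambda$ in the coefficient of $r_2$ is what produces the symmetric, parameter-free middle term $\frac{r_2}{2}$. If one preferred a self-contained derivation rather than invoking Proposition 3.4, one could instead set $r_3 = r_2$ in the piecewise expression (3.6) for $m_{\lambda}(\xi \leq x)$, note that the plateau interval $r_2 \leq x \leq r_3$ degenerates to a point, and recompute the integral in (3.1) directly; but this merely replays the proof of Proposition 3.4 in the degenerate case and offers nothing new, so the specialization route is the natural one.
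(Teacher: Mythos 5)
Your proof is correct and follows exactly the route the paper intends: the corollary is obtained from Proposition 3.4 by setting $r_3 = r_2$ in formula (3.7) and using $(1-\lambda) + \lambda = 1$ to merge the two $r_2$-contributions into the single term $\frac{r_2}{2}$. The paper gives no explicit proof precisely because this one-line specialization is the argument, so nothing is missing.
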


\section{An inventory problem with fuzzy variables as demands}

This section concerns a risk - neutral multi - item inventory problem characterized by the following two hypotheses:

(I) the components of the demand vector are fuzzy variables;

(II) the objective function of the inventory model is defined by using the expected value operator $E_{\lambda}(\cdot)$ introduced in the previous section.

The inventory problem with $n$ items has the following initial data:

$\bullet$ $c_1,\ldots, c_n$ : unit fixed costs per inventoried item;

$\bullet$ $d_1,\ldots, d_n$ : unit revenues per inventoried item;

$\bullet$ $h_1,\ldots, h_n$ : unit holding costs per inventoried item;

$\bullet$ $\vec D = (D_1,\ldots, D_n)$ : fuzzy demand vector in the inventory problem;

$\bullet$ $\vec x = (x_1,\ldots, x_n)$ : order quantity vector in the inventory problem.

The components $D_1,\ldots, D_n$ of $\vec D$ are fuzzy variables. We shall assume that $c_i\geq 0$, $d_i\geq 0$ and $D_i > 0$, for all $i = 1, \ldots ,n$.

\begin{remarca}
The initial data of the possibilistic inventory problem are similar to the probabilistic inventory problems from \cite{luciano}, \cite{borgonovo}, the credibilistic inventory problems from \cite{li1},\cite{li2} and the possibilistic inventory problems from \cite{georgescu1}.

\end{remarca}

 We will further observe that the essential difference between the three types of models lies in the way of choosing the objective function of the optimization problem:
 the models in \cite{luciano}, \cite{borgonovo} use the probabilistic expected value, those in \cite{li1}, \cite{li2} use Liu credibilistic expected value \cite{liu1} and those in
 \cite{georgescu1} use the possibilistic expected value from \cite{carlsson1}.

Starting from above input data we will formulate a risk-neutral problem. Similar with \cite{li1}, p. 132, the quantity $d_i x_i$ is the total revenue of the $i^{th}$ item and the fuzzy variables $\frac{h_i x_i^2}{2}\frac{1}{D_i}$ is the holding cost of the $i^{th}$ item.

We fix a parameter $\lambda\in [0,1]$, so we can use the expected value operator $E_{\lambda}(\cdot)$ defined in (3.1). According to Lemma 3.2, we remark that $E_f(\frac{1}{D_i}) > 0$, for all $i = 1, \ldots ,n$.

The profit function of item $i$ has the following form:

(4.1) $\pi_i(x_i, D_i) = d_ix_i - c_i - \frac{h_ix_i^2}{2}\frac{1}{D_i}$

The total profit function of the possibilistic inventory problem has the following form:

(4.2) $\pi(\vec x, \vec D)$ =  $\displaystyle\sum_{i=1}^n \pi_i(x_i, D_i)$ = $\displaystyle\sum_{i=1}^n (d_ix_i -c_i - \frac{h_ix_i^2}{2}\frac{1}{D_i})$

Then the optimization problem associated with the previous inventory model has the following form:

(4.3)
$\begin{cases}
\displaystyle \max_{\vec x} E_{\lambda}(\pi(\vec x, \vec D))\\
\vec x \geq 0
\end{cases}$

\begin{remarca}
The objective function in the optimization problem (4.3) is the expected value $E_{\lambda}(\pi(\vec x, \vec D))$ of the fuzzy variable $\pi(\vec x, \vec D)$ (w.r.t. the measure $m_{\lambda})$.

\end{remarca}

\begin{remarca}
For $\lambda = \frac{1}{2}$ we obtain as a particular case the credibilistic inventory problem studied in \cite{li1}:

(4.4)$\begin{cases}
\displaystyle \max_{\vec x} E_{\lambda}(\pi(\vec x, \vec D))\\
\vec x \geq 0
\end{cases}$

\end{remarca}

By applying Proposition 3.1 to (4.2), the  expected value $E_{\lambda}(\pi(\vec x, \vec D))$ can be written

(4.5)
$E_{\lambda}(\pi(\vec x, \vec D))$ = $\displaystyle\sum_{i=1}^n [d_ix_i -c_i - \frac{h_ix_i^2}{2} E_{\lambda}(\frac{1}{D_i})]$

hence the optimization problem (4.3) becomes

(4.6)$\begin{cases}
\displaystyle \max_{x_1, \ldots, x_n} \sum_{i=1}^n [d_ix_i -c_i - \frac{h_ix_i^2}{2} E_{\lambda}(\frac{1}{D_i})]\\
 x_i \geq 0, i=1, \ldots, n
\end{cases}$

The decision - maker aims to find the non - negative values $x_1, \ldots ,x_n$ that maximize the expected total profit $E_{\lambda}(\pi(\vec x, \vec D))$.

In particular, setting $\lambda=\frac{1}{2}$ in (4.6) one obtains the credibilistic inventory problem from \cite{li1}.

(4.7)$\begin{cases}
\displaystyle \max_{x_1, \ldots, x_n} \sum_{i=1}^n [d_ix_i -c_i - \frac{h_ix_i^2}{2} E_C(\frac{1}{D_i})]\\
 x_i \geq 0, i=1, \ldots, n
\end{cases}$

\begin{propozitie} The optimization problem (4.6) has the following solution:

(4.8) $x_i^* = \frac{d_i}{h_i E_{\lambda}(\frac{1}{D_i})}$, for $i = 1, \ldots ,n$

\end{propozitie}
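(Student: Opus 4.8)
The plan is to exploit the separability of the objective in (4.6). Writing $g(\vec x) = \sum_{i=1}^n f_i(x_i)$ with $f_i(x_i) = d_i x_i - c_i - \frac{h_i x_i^2}{2} E_{\lambda}(\frac{1}{D_i})$, each summand $f_i$ depends on the single variable $x_i$ alone. Since the feasible region $\{\vec x : x_i \geq 0\}$ is a product of half-lines, maximizing $g$ over it reduces to maximizing each $f_i$ separately over $x_i \geq 0$. Thus it suffices to solve $n$ independent one-variable problems and then assemble the coordinates.

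First I would record that, by Lemma 3.2, $E_{\lambda}(\frac{1}{D_i}) > 0$ for every $i$ (the hypothesis $D_i > 0$ gives $\frac{1}{D_i} > 0$, so the lemma applies to $\xi = \frac{1}{D_i}$). Together with $h_i > 0$, this makes the coefficient $-\frac{h_i}{2} E_{\lambda}(\frac{1}{D_i})$ of $x_i^2$ strictly negative, so each $f_i$ is a strictly concave quadratic in $x_i$. Consequently $f_i$ has a unique global maximizer on all of $\mathbb{R}$, located where $f_i'(x_i) = 0$; the additive constant $-c_i$ plays no role in determining this location.

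Next I would compute the first-order condition. Differentiating gives $f_i'(x_i) = d_i - h_i x_i E_{\lambda}(\frac{1}{D_i})$, and setting this to zero yields $x_i^* = \frac{d_i}{h_i E_{\lambda}(\frac{1}{D_i})}$, which is exactly formula (4.8). Since $d_i \geq 0$, $h_i > 0$ and $E_{\lambda}(\frac{1}{D_i}) > 0$, this critical point satisfies $x_i^* \geq 0$, so it lies in the feasible set and the non-negativity constraint is inactive. By strict concavity, this interior stationary point is the global maximum of $f_i$ over $x_i \geq 0$, and collecting the $n$ optimal coordinates gives the solution of (4.6).

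There is no substantial obstacle here: the argument is a routine separable concave maximization. The only point that genuinely requires the preceding theory is the strict positivity of $E_{\lambda}(\frac{1}{D_i})$, supplied by Lemma 3.2, which simultaneously guarantees that the quadratic is concave (so the stationary point is a maximum rather than a minimum) and that the maximizer is finite; were $E_{\lambda}(\frac{1}{D_i}) > 0$ to fail, formula (4.8) would be meaningless and the supremum could be $+\infty$.
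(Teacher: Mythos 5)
Your proof is correct and takes essentially the same route as the paper: write the first-order condition $d_i - h_i E_{\lambda}(\frac{1}{D_i})\, x_i = 0$ and solve it, using the positivity of $E_{\lambda}(\frac{1}{D_i})$ guaranteed by Lemma 3.2. Your write-up is in fact slightly more complete than the paper's, which stops at solving the stationary-point equation, whereas you also verify strict concavity of each separable term (so the critical point is a maximum, not merely a stationary point) and check $x_i^* \geq 0$, so that the non-negativity constraint is inactive.
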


\begin{proof}
In order to find the solution of the optimization problem (4.6) we write the first - order condition

$\frac{\partial}{\partial x_i}\displaystyle\sum_{i=1}^n (d_ix_i -c_i - \frac{h_ix_i^2}{2} E_{\lambda}(\frac{1}{D_i}))=0$, for $i = 1,\ldots,n$,

therefore by a simple computation we obtain the equations

(4.9) $d_i - h_i E_{\lambda}(\frac{1}{D_i}) x_i = 0$, for $i = 1, \ldots, n$.

We remind that $E_{\lambda}(\frac{1}{D_i})> 0$ for $i = 1,\ldots,n$. Thus the solution of the optimization problem (3.5) will have the following form

$x_i^* = \frac{d_i}{h_i E_f(\frac{1}{A_i})}$, for $i = 1, \ldots, n$

\end{proof}

\section{Solution form when the demands are trapezoidal fuzzy variables}

According to Proposition 4.1, in order to compute the values $(x_1^\ast, \ldots ,x_n^\ast)$ of the solution of inventory problem (4.6) we need to compute the expected  values $E_{\lambda}(\frac{1}{D_1})$, \ldots, $E_{\lambda}(\frac{1}{D_n})$.
The computation of these expected values depends on the form of the fuzzy variables $D_1, \cdots, D_n$ and in most cases this operation seems to be very difficult.
In this section we solve this problem whenever the demands $D_1, \cdots, D_n$ are trapezoidal or triangular
fuzzy numbers. The formulas obtained for the computation of the optimal solutions $x_1^\ast, \ldots, x_n^\ast$ have simple algebraic forms which makes them very suitable from a computational point of view.

We will fix the parameter $\lambda\in [0,1]$. The following proposition is a key result of this section: the application of the formula (5.1) will lead us to find the form of optimal solutions $x_1^\ast, \ldots, x_n^\ast$.

\begin{propozitie}
Let $D$ be a trapezoidal fuzzy number $D = (r_1,r_2,r_3,r_4)$ such that $0< r_1\leq r_2\leq r_3\leq r_4$ then the expected value $E_{\lambda}(\frac{1}{D})$ has the following form

(5.1) $E_{\lambda}(\frac{1}{D}) = \frac{\lambda}{r_2- r_1}ln\frac{r_2}{r_1} + \frac{1-\lambda}{r_4- r_3}ln\frac{r_4}{r_3}$

\end{propozitie}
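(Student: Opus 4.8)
The plan is to reduce everything to the integral representation for positive variables in Lemma 3.2 and then read off the integrand from the piecewise formula of Lemma 3.3. Since $D>0$ we have $\frac{1}{D}>0$, so Lemma 3.2 applies and gives
\[
E_\lambda\left(\frac{1}{D}\right) = \int_0^\infty m_\lambda\left(\frac{1}{D}\geq r\right)dr .
\]
For $r>0$ the events $\{\frac{1}{D}\geq r\}$ and $\{D\leq \frac{1}{r}\}$ coincide (both $D$ and $r$ are positive), so the integrand equals $m_\lambda(D\leq \frac{1}{r})$, which I can evaluate from $(3.6)$ by taking $x=\frac{1}{r}$. This substitution is the only genuinely nonroutine idea; the rest is careful bookkeeping.

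First I would translate the five cases of $(3.6)$ into intervals for $r$. The thresholds $\frac{1}{r}\le r_1$, $r_1\le\frac1r\le r_2$, $r_2\le\frac1r\le r_3$, $r_3\le\frac1r\le r_4$, $r_4\le\frac1r$ become, in the variable $r$, the intervals $[\frac1{r_1},\infty)$, $[\frac1{r_2},\frac1{r_1}]$, $[\frac1{r_3},\frac1{r_2}]$, $[\frac1{r_4},\frac1{r_3}]$ and $(0,\frac1{r_4}]$. On $[\frac1{r_1},\infty)$ the integrand is $0$ and contributes nothing; on $(0,\frac1{r_4}]$ it equals $1$, contributing $\frac1{r_4}$; on the flat middle interval $[\frac1{r_3},\frac1{r_2}]$ it is the constant $\lambda$, contributing $\lambda\bigl(\frac1{r_2}-\frac1{r_3}\bigr)$. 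On the two ramps the integrand becomes, after setting $x=\frac1r$, a term linear in $\frac1r$ plus a constant: on $[\frac1{r_2},\frac1{r_1}]$ it is $\frac{\lambda}{r_2-r_1}\bigl(\frac1r-r_1\bigr)$, and on $[\frac1{r_4},\frac1{r_3}]$ it is $\frac{1}{r_4-r_3}\bigl(\lambda r_4-r_3+\frac{1-\lambda}{r}\bigr)$.

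Next I would perform the four elementary integrations. The only antiderivative producing a logarithm is $\int\frac{dr}{r}=\ln r$. On $[\frac1{r_2},\frac1{r_1}]$ the coefficient of $\frac1r$ is $\frac{\lambda}{r_2-r_1}$, giving $\frac{\lambda}{r_2-r_1}\ln\frac{r_2}{r_1}$; on $[\frac1{r_4},\frac1{r_3}]$ the coefficient of $\frac1r$ is $\frac{1-\lambda}{r_4-r_3}$, giving $\frac{1-\lambda}{r_4-r_3}\ln\frac{r_4}{r_3}$. These are exactly the two terms of $(5.1)$.

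It then remains to verify that all non-logarithmic contributions cancel, which is the main (and only delicate) point. Using $\frac1{r_3}-\frac1{r_4}=\frac{r_4-r_3}{r_3r_4}$ and $\frac1{r_1}-\frac1{r_2}=\frac{r_2-r_1}{r_1r_2}$, the rational parts simplify: the right ramp contributes $\frac{\lambda}{r_3}-\frac1{r_4}$, the left ramp contributes $-\frac{\lambda}{r_2}$, and adding these to $\frac1{r_4}$ (from the top interval) and $\lambda\bigl(\frac1{r_2}-\frac1{r_3}\bigr)$ (from the flat interval) makes the sum telescope to $0$. Hence only the two logarithmic terms survive, which is precisely $(5.1)$. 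I would close by noting that the degenerate cases $r_1=r_2$ or $r_3=r_4$ are recovered by continuity, since $\lim_{a\to b}\frac{1}{b-a}\ln\frac{b}{a}=\frac1b$.
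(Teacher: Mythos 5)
Your proposal is correct and follows essentially the same route as the paper's own proof: reduce to $\int_0^\infty m_\lambda(\tfrac{1}{D}\geq r)\,dr$ via Lemma 3.2, rewrite the integrand as $m_\lambda(D\leq \tfrac{1}{r})$ using Lemma 3.3, split into the same four intervals, and integrate, with the non-logarithmic terms cancelling exactly as you describe. Your explicit verification of the cancellation and the continuity remark for the degenerate cases $r_1=r_2$ or $r_3=r_4$ are slightly more careful than the paper's presentation, but the argument is the same.
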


\begin{proof} Firstly we observe that the condition $0< r_1$ means $D>0$, hence one obtains $\frac{1}{D}>0$. By using Lemma 3.3 we get the following equalities:

$m_\lambda(\frac{1}{D}\geq r)=m_\lambda(D \leq \frac{1}{r})=\left\{
   \begin{array}{ll}
     1 & \quad r_4 \leq \frac{1}{r}, \\
     \frac{\lambda(r_4-\frac{1}{r})+\frac{1}{r}-r_3}{r_4-r_3} & \quad r_3 \leq \frac{1}{r} \leq r_4,\\
     \lambda & \quad r_2 \leq \frac{1}{r}  \leq r_3,\\
     \frac{\lambda(\frac{1}{r}-r_1}{r_2-r_1} & \quad r_1 \leq \frac{1}{r}  \leq r_2,\\
     0  & \quad \frac{1}{r} \leq r_1.
   \end{array}
  \right.$

which can be written as follows:

(5.2) $m_\lambda(\frac{1}{D}\geq r)=\left\{
   \begin{array}{ll}
     1 & \quad r \leq \frac{1}{r_4}, \\
     \frac{1}{r_4-r_3}[(1-\lambda) \frac{1}{r}+\lambda r_4-r_3]& \quad \frac{1}{r_4} \leq r  \leq \frac{1}{r_3},\\
     \lambda & \quad \frac{1}{r_3}\leq   r  \leq \frac{1}{r_2},\\
     \frac{\lambda}{r_2-r_1} [\frac{1}{r}-r_1]& \quad \frac{1}{r_2} \leq r  \leq \frac{1}{r_1},\\
     0  & \quad \frac{1}{r} \leq 0.
   \end{array}
  \right.$

According to Lemma 3.2 we obtain

(5.3) $E_{\lambda}(\frac{1}{D})$ = $\int^{\infty}_0 m_{\lambda}(\frac{1}{D}\geq r)dr = I_1+I_2+I_3+I_4$

where $I_1,I_2,I_3,I_4$ have the following expressions

$I_1 = \int_0^{\frac{1}{r_4}}dr = \frac{1}{r_4}$

$I_2 = \frac{1}{r_4-r_3}\int_{\frac{1}{r_4}}^{\frac{1}{r_3}}[\lambda(r_4-\frac{1}{r})+ \frac{1}{r}-r_3]dr$ = $\frac{1}{r_4-r_3}[(1-\lambda)\ln\frac{r_4}{r_3} + (\lambda r_4 - r_3)(\frac{1}{r_3}- \frac{1}{r_4})]$

$I_3 = \lambda\int_{\frac{1}{r_3}}^{\frac{1}{r_2}}dr = \lambda(\frac{1}{r_2}-\frac{1}{r_3})$

$I_4 = \frac{\lambda}{r_2-r_1}\int_{\frac{1}{r_2}}^{\frac{1}{r_1}}[\frac{1}{r}-r_1]dr$ = $\frac{\lambda}{r_2-r_1}[ln\frac{r_2}{r_1}-r_1(\frac{1}{r_1}-\frac{1}{r_2})]$

Substituting in (5.3) these values of $I_1,I_2,I_3,I_4$ we get the formula (5.1)

\end{proof}

\begin{corolar}\cite{li1}
Let $D$ be a trapezoidal fuzzy number $D = (r_1,r_2,r_3,r_4)$ such that $0< r_1\leq r_2\leq r_3\leq r_4$ then the credibilistic
expected value $E_C(\frac{1}{D})$ has the following form

(5.4) $E_C(\frac{1}{D}) = \frac{1}{2(r_2- r_1)}ln\frac{r_2}{r_1} + \frac{1}{2(r_4- r_3)}ln\frac{r_4}{r_3}$

\end{corolar}

\begin{proof}
If we take $\lambda = \frac{1}{2}$ in (5.1) then we obtain the formula (5.4).
\end{proof}

\begin{remarca} If in formula (5.1) one takes $r_2 = r_3$ then $D$ is the triangular fuzzy number $D = (r_1,r_2, r_4)$ and the expected value $E_{\lambda}(\frac{1}{D})$ has the following form

(5.5) $E_{\lambda}(\frac{1}{D}) = \frac{\lambda}{r_2- r_1}ln\frac{r_2}{r_1} + \frac{1-\lambda}{r_4- r_3}ln\frac{r_4}{r_2}$

\end{remarca}

If in (5.5) we set $\lambda = \frac{1}{2}$  then we get the formula of the credibilistic expected value $E_{\lambda}(\frac{1}{D})$ from Theorem $2$ of \cite{li1}:

(5.6) $E_C(\frac{1}{D}) = \frac{1}{2(r_2- r_1)}ln\frac{r_2}{r_1} + \frac{1}{2(r_4- r_2)}ln\frac{r_4}{r_2}$

\begin{remarca} Often in literature a trapezoidal fuzzy number $D$ is given under the form $D = (a-\alpha, a, b, b+\beta)$, with $a, b\in \mathbb{R}$ and $\alpha, \beta \geq 0$ (Figure $1$). Thus its membership $\mu_D$ has the form:

$$
\mu_D(x) = \left\{
   \begin{array}{ll}
     1-\frac{a-x}{\alpha} & \quad a-\alpha \leq x \leq a, \\
     1 & \quad a \leq x \leq b,\\
     1- \frac{x-b}{\beta} & \quad b \leq x \leq b+ \beta,\\
     0  & \quad otherwise.
   \end{array}
  \right.
$$

\begin{figure}
\centering
\includegraphics[scale=1]{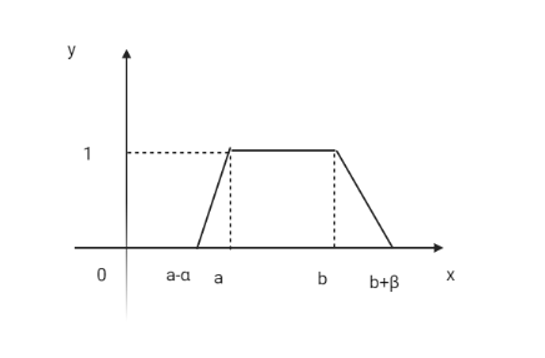}
\caption{Trapezoidal fuzzy number}
\label{fig:figura1}
\end{figure}

Assuming that $0<a-\alpha$ we have $D>0$ and the formula (5.1) becomes

(5.7) $E_{\lambda}(\frac{1}{D}) = \frac{\lambda}{\alpha}ln\frac{a}{a-\alpha} + \frac{1-\lambda}{\beta}ln\frac{b+\beta}{b}$

\end{remarca}

\begin{remarca} Assume that a triangular fuzzy number $D$ is written under the form $D = (a-\alpha,a,a+\beta)$, with $a\in \mathbb{R}$ and $\alpha,\beta\geq 0$. If $0<a-\alpha$ the formula (5.5) becomes

(5.8) $E_{\lambda}(\frac{1}{D}) = \frac{\lambda}{\alpha}ln\frac{a}{a-\alpha} + \frac{1-\lambda}{\beta}ln\frac{a+\beta}{a}$

\end{remarca}

The previous formulas (5.1), (5.5), (5.7) and (5.8) provide very computable expressions for the expected value $
E_{\lambda}(\frac{1}{D})$ for the particular cases when $D$ is a trapezoidal or
a triangular fuzzy number.

By using  these formulas  we are now able to compute the solution $x_1^{\ast},\cdots, x_n^{\ast}$ of
the optimization problem (4.6) whenever the components $D_1, \ldots, D_n$ of demand vector are trapezoidal fuzzy numbers, respectively triangular fuzzy numbers.

\begin{teorema}
Assume that the components $A_1, \ldots ,A_n$ of demand vector $\vec A$ are trapezoidal fuzzy numbers $D_i = (a_i - \alpha_i,a_i, b_i,b_i + \beta_i)$, $i = 1, \ldots, n$, where
$0< a_i - \alpha_i\leq a_i\leq b_i\leq b_i + \beta_i$, for $i = 1, \ldots, n$. Then the solution of the optimization problem (3.5) has the following form

(5.9) $x_i^\ast=\frac{d_i}{h_i[\frac{\lambda}{\alpha_i}ln\frac{a_i}{a_i-\alpha_i}+ \frac{1-\lambda}{\beta_i}ln\frac{b_i+\beta_i}{b_i}]}$, for all $i = 1, \ldots, n$.
\end{teorema}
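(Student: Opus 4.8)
The plan is to obtain (5.9) as a direct specialization of the general solution formula (4.8) from Proposition 4.1. That proposition already asserts that the optimization problem (4.6) is maximized at $x_i^{\ast} = \frac{d_i}{h_i E_{\lambda}(\frac{1}{D_i})}$ for each $i$, so the only remaining task is to insert into each denominator the explicit value of $E_{\lambda}(\frac{1}{D_i})$ determined by the trapezoidal shape of the demand.

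First I would check that Proposition 4.1 applies with these demands. Each $D_i = (a_i - \alpha_i, a_i, b_i, b_i + \beta_i)$ satisfies $0 < a_i - \alpha_i$, so $D_i > 0$ and therefore $\frac{1}{D_i} > 0$; Lemma 3.2 then gives $E_{\lambda}(\frac{1}{D_i}) > 0$. This confirms that the denominators appearing in (4.8) are well defined and strictly positive, so each $x_i^{\ast}$ in (4.8) is a legitimate maximizer and the first-order condition (4.9) does produce the claimed optimum.

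Next I would evaluate $E_{\lambda}(\frac{1}{D_i})$ for every $i$. The demands are presented in the shifted parametrization $(a_i - \alpha_i, a_i, b_i, b_i + \beta_i)$, which is precisely the form to which formula (5.7) applies (itself derived from Proposition 5.1 under the hypothesis $0 < a_i - \alpha_i$). Hence $E_{\lambda}(\frac{1}{D_i}) = \frac{\lambda}{\alpha_i}\ln\frac{a_i}{a_i - \alpha_i} + \frac{1-\lambda}{\beta_i}\ln\frac{b_i + \beta_i}{b_i}$, and substituting this into the denominator of (4.8) yields exactly (5.9). Since every step merely invokes an already-established result, there is no genuine obstacle here; the only care needed is the bookkeeping of the positivity conditions (to legitimately apply Lemma 3.2 and keep each denominator nonzero) and the matching of the shifted parametrization of (5.7) with the ordered-quadruple convention of Proposition 5.1. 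The final substitution is purely mechanical.
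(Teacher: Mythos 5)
Your proposal is correct and follows exactly the paper's own route: apply formula (5.7) to each trapezoidal demand $D_i = (a_i-\alpha_i, a_i, b_i, b_i+\beta_i)$ and substitute the resulting values of $E_{\lambda}(\frac{1}{D_i})$ into the general solution formula (4.8) of Proposition 4.1. The extra positivity bookkeeping you include (via Lemma 3.2) is a sensible elaboration of what the paper leaves implicit, not a different argument.
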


\begin{proof}
By (5.7), for each $i = 1, \ldots, n$ we have

$E_{\lambda}(\frac{1}{D_i}) = \frac{\lambda}{\alpha_i}ln\frac{a_i}{a_i-\alpha_i} + \frac{1-\lambda}{\beta_i}ln\frac{b_i+\beta}{b_i}$

If we substitute these values of $E_{\lambda}(\frac{1}{D_1}),\cdots,E_{\lambda}(\frac{1}{D_n})$ in (4.8) then we get the desired formula (5.9).

\end{proof}

\begin{corolar}
If $D_1, \ldots, D_n$ are the triangular fuzzy numbers $D_i = (a_i- \alpha_i, a_i, a_i+\beta_i)$, $i = 1, \ldots, n$, where $0< a_i - \alpha_i\leq a_i\leq b_i + \beta_i$, for $i = 1, \ldots, n$ then the solution of optimization problem (4.3) has the form

(5.10) $x_i^\ast=\frac{d_i}{h_i[\frac{\lambda}{\alpha_i}ln\frac{a_i}{a_i-\alpha_i}+ \frac{1-\lambda}{\beta_i}ln\frac{a_i+\beta_i}{a_i}]}$, for all $i = 1, \ldots ,n$.

\end{corolar}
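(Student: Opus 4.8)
The plan is to proceed exactly as in the proof of Theorem 5.5, replacing the trapezoidal expected-value formula (5.7) with its triangular counterpart (5.8). First I would invoke Proposition 4.1, which gives the solution of the optimization problem (4.6) in the general form $x_i^\ast = \frac{d_i}{h_i E_{\lambda}(\frac{1}{D_i})}$ for each $i = 1, \ldots, n$. This formula is legitimate because the hypothesis $0 < a_i - \alpha_i$ ensures $D_i > 0$, whence $E_{\lambda}(\frac{1}{D_i}) > 0$ by Lemma 3.2 and the denominator does not vanish.

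Next I would compute the individual expected values. Since each $D_i = (a_i - \alpha_i, a_i, a_i + \beta_i)$ is a triangular fuzzy number with $0 < a_i - \alpha_i$, formula (5.8) of Remark 5.6 applies and yields
$$E_{\lambda}\Bigl(\frac{1}{D_i}\Bigr) = \frac{\lambda}{\alpha_i}\ln\frac{a_i}{a_i - \alpha_i} + \frac{1-\lambda}{\beta_i}\ln\frac{a_i + \beta_i}{a_i}, \qquad i = 1, \ldots, n.$$
Substituting these $n$ values into the general formula (4.8) gives directly the claimed expression (5.10) for the optimal order quantities $x_i^\ast$.

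As an independent check, one may instead regard each triangular number as the degenerate trapezoid $(a_i - \alpha_i, a_i, b_i, b_i + \beta_i)$ obtained by setting $b_i = a_i$; then Theorem 5.5 applies verbatim, and the term $\frac{1-\lambda}{\beta_i}\ln\frac{b_i + \beta_i}{b_i}$ collapses to $\frac{1-\lambda}{\beta_i}\ln\frac{a_i + \beta_i}{a_i}$, recovering (5.10). Since both routes amount to pure substitution into already-established formulas, there is no genuine obstacle in this corollary: the only point requiring care is confirming the positivity of each $D_i$, so that Proposition 4.1 and formula (5.8) are indeed applicable, and this is guaranteed by the standing assumption $0 < a_i - \alpha_i$.
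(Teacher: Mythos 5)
Your proposal is correct and matches the paper's argument: the paper's own proof is exactly your ``independent check,'' namely setting $b_i=a_i$ in formula (5.9) of Theorem 5.5. Your primary route (substituting formula (5.8) into the general solution (4.8) of Proposition 4.1) is the same pair of substitutions carried out in a different order, so the two are essentially identical.
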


\begin{proof}
If in (5.9) one sets $b=a$, then the formula (5.10) follows immediately.
\end{proof}

Now we shall write the formula (5.9) for the following particular values of $\lambda$:

$(a)$ $\lambda=1/3$ (the pessimistic case)

(5.11) $x_i^\ast=\frac{3d_i}{h_i[\frac{1}{\alpha_i}ln\frac{a_i}{a_i-\alpha_i}+ \frac{2}{\beta_i}ln\frac{b_i+\beta_i}{b_i}]}$, for all $i = 1, \ldots ,n$.

$(b)$ $\lambda=1/2$ (the credibilistic case \cite{li1})

(5.12) $x_i^\ast=\frac{2d_i}{h_i[\frac{1}{\alpha_i}ln\frac{a_i}{a_i-\alpha_i}+ \frac{1}{\beta_i}ln\frac{b_i+\beta_i}{b_i}]}$, for all $i = 1, \ldots ,n$.

$(c)$ $\lambda=2/3$ (the optimistic case)

(5.13) $x_i^\ast=\frac{3d_i}{h_i[\frac{2}{\alpha_i}ln\frac{a_i}{a_i-\alpha_i}+ \frac{1}{\beta_i}ln\frac{b_i+\beta_i}{b_i}]}$, for all $i = 1, \ldots, n$.

\section{A numerical example}

In order to solve the optimization problems associated with some inventory models we should know the form of the variables $D_1,\cdots,D_n$ and of the
(probabilistic, credibilistic, possibilistic, etc.) indicators  that appear in models. In the examples of credibilistic inventory problems from \cite{li1}, \cite{li2}  the  expressions of $D_1,\cdots, D_n$ are assumed to be trapezoidal fuzzy numbers.

In general, the mathematical expressions of $D_1,\cdots,D_n$ are not known, but through measurements can be found different values of them. In the numerical example of possibilistic inventory problem from \cite{georgescu1} it started from a data table, then the method of Vercher et al. \cite{vercher}  was applied to determine the concrete form of fuzzy numbers $D_1,\cdots, D_n$.

In this section we will present the solution of an $m_{\lambda}$-inventory problem in which the initial information on the variables $D_1,\cdots,D_n$ (which in our case are trapezoidal numbers) is given in the form of a numerical table. In order to obtain the trapezoidal numbers that describe the demands $D_1,\cdots,D_n$ we will apply the sample percentile method of Vercher et al. \cite{vercher}.

We continue with the presentation of the values of basic parameters $d_i, c_i, d_i$, so the inventory problem is entirely defined. Finally, we apply the formulas (5.11)- (5.13)
in order to obtain the optimal solutions of the model.

Our inventory problem has a demand vector of size $10$. The following table contains the data we have on demand vector.

.

\begin{center}
 \begin{tabular}{||c c c c c c c c c c||}
 \hline
 Item1 & Item2 & Item3 & Item4 & Item5& Item6& Item7& Item8& Item9& Item10 \\ [0.5ex]
 \hline\hline
 35 & 20 & 30 & 25 & 28 & 33 & 18 & 18 & 31 & 20\\
 \hline
 30 & 30 & 50 & 25 & 32 & 37 & 27 & 28 & 33 & 27 \\
 \hline
 15 & 35 & 28 & 36 & 25 & 20 &  33 & 17 & 25 & 31\\
 \hline
 25 & 35 & 40 & 35 & 35 & 40 & 35 & 20 & 30 & 37\\
 \hline
 25 & 28 & 25 & 32 & 50 & 37 & 28 &  19 & 35 & 35\\
 \hline
 28 & 25 & 42 & 27 & 45 & 28 & 28 & 37 & 35 & 37\\
 \hline
 31 & 27 & 36 & 35 & 43 & 35 & 35 & 27 & 22 & 35\\
 \hline
 30 & 24 & 39 & 28 & 27 & 35 & 24 & 37 & 27 & 25\\
 \hline
 44 & 33 & 44 & 28 & 32 & 22 & 39 & 30 & 29 & 25\\
 \hline
 37 & 34 & 37 & 44 & 44 & 32 & 47 & 36 & 28 & 37\\
 \hline
 23 & 17 & 22 & 33 & 32 & 29 & 31 & 31 & 45 & 19\\ [1ex]
 \hline
\end{tabular}
\end{center}

In column $i$ of the table are placed the known values of item $i$. In a probabilistic inventory model, the above columns will contain values of random variables. In this case the maximization problem of the model will be obtained by usual statistical methods.

Under the hypothesis that the $10$ items are modeled by trapezoidal fuzzy numbers, one has to convert the data from the above table in $10$ such fuzzy numbers (each column is assigned to a trapezoidal fuzzy number).

Let's present shortly the percentile method of Vercher et al. \cite{vercher}, by which to a data set of real numbers $x_1, \ldots, x_m$ one assigns a trapezoidal fuzzy number $A=(a, b, \alpha, \beta)$.

Let us denote by $P_k$ the $k$-the percentile of the sample $x_1, \ldots, x_m$. Then the trapezoidal fuzzy number $A=(a, b, \alpha, \beta)$ will be determined by the formulas:

$a=P_{40}$, $b=P_{60}$, $\alpha=P_{40}-P_5$, $\beta=P_{95}-P_{60}$ (6.1)

By applying Vercher et al.'s method \cite{vercher} to each of the columns of the table one obtains the trapezoidal fuzzy numbers in the table below.

\begin{center}
 \begin{tabular}{||c c c c c ||}
 \hline
 $A_1$ & $A_2$ & $A_3$ & $A_4$ & $A_5$ \\ [0.5ex]
 \hline\hline
 (28,30,9,10.5) & (27,30,8.5,5) &(36,39,12.5, 5) & (28,32,3,4) & (32,35,6,10)  \\ [1ex]
 \hline
\end{tabular}
\end{center}

\begin{center}
 \begin{tabular}{||c c c c c ||}
 \hline
  $A_6$ & $A_7$ & $A_8$ & $A_9$ & $A_{10}$ \\ [0.5ex]
 \hline\hline
 (32,35,11,2) & (28,33,7,6) & (27,30,9.5,7) & (29,31,5.5,4) & (27,35,7.5,2) \\ [1ex]
 \hline
\end{tabular}
\end{center}

The trapezoidal fuzzy numbers $A_1, \ldots, A_{10}$ obtained from the initial table will be the components of the demand vector of a risk neutral multi--item inventory problem.
This inventory problem will be defined by the data in the first five columns of following table:

\begin{center}
 \begin{tabular}{||c c c c c c c c||}
 \hline
 Item & $d_i$ & $c_i$ & $h_i$ & $A_i=(a_i, b_i, \alpha_i, \beta_i)$ & $x_i^\ast (\lambda=1/3)$ & $x_i^\ast (\lambda=1/2)$ & $x_i^\ast (\lambda=2/3)$  \\ [0.5ex]
 \hline\hline
 1 & 12 & 2 & 0.5 & (28,30,9,10.5) & 718.21 & 668.76 & 627.44 \\
 \hline
 2 & 11 & 1 & 0.6 & (27,30,8.5,5) & 518.19 & 486.88 & 459.14 \\
 \hline
 3 & 14 & 3 & 0.5 & (36,39,12.5, 5) & 1019.75 & 961.42 & 909.4 \\
 \hline
 4 & 10 & 4 & 0.8 &  (28,32,3,4) & 387.92 & 371.9& 357.14 \\
 \hline
 5 & 11 & 5 & 0.9 & (32,35,6,10)  & 432.03 & 409.19 & 388.64\\
 \hline
 6 & 10 & 3 &  0.9 & (32,35,11,2) & 355.13 & 336.3& 319.37\\
 \hline
 7 & 12 & 2 & 0.5 & (28,33,7,6) & 743.93 & 696.25 & 654.32\\
 \hline
 8 & 15 & 1 & 0.6 & (27,30,9.5,7) & 710.45 & 661.32 & 618.54 \\
 \hline
 9 & 13 & 3 & 0.7 &  (29,31,5.5,4) & 563.24 & 541.63 & 521.61\\
 \hline
 10 & 13 & 4 & 0.9 & (27,35,7.5,2) & 437.88 & 405.88 & 378.24\\ [1ex]
 \hline
\end{tabular}
\end{center}

Columns two, three and four of the table contain the unit fixed costs, unit revenues and holding costs of the model.  The trapezoidal fuzzy numbers from the fifth column make up the demand vector in the $m_{\lambda}$--inventory problem. In fact, for distinct parameters $\lambda\in [0,1]$ we obtain distinct inventory problems. We consider the three inventory models (a)-(c) corresponding to the parameters $1/3$, $1/2$ and $2/3$.
By applying the formulas (5.11)-(5.13) we  obtain the  solutions of the three optimization problems. These solutions are placed in the last three columns of the above table.

\begin{remarca}
Regarding the last three columns of the table above, it is noticed that with the increase of the parameter $\lambda$ ($\frac{1}{3} < \frac{1}{2} < \frac{2}{3}$) the solution values  of the optimization problem decrease. The theoretical argument of this fact is given by
Proposition 8.2 in the Appendix.
\end{remarca}

\section{Conclusions}

In the work we studied a new inventory model whose construction is based on the parametric measure $m_{\lambda}$ (introduced by Yang and Iwamura in \cite{YI}) and on the notion of
$m_{\lambda}$--expected value (introduced by Dzouche et al. in \cite{dzuche1}). More precisely, in this inventory model, the demands and the total profit are fuzzy variables and the objective
function of the optimization problem is the $m_{\lambda}$--expected value of total profit. It was found the general form of the solution of the optimization problem and
when the demands are trapezoidal or triangular fuzzy variables computationally efficient forms of the solution have been found.

An open problem is finding the calculation formulas for the optimal solutions also when the demands are represented by other types of fuzzy variables: discrete repartitions, Erlang fuzzy variables, etc.

The inventory model in the paper is risk--neutral. Another open problem is the study of risk--averse inventory models in the framework of $m_{\lambda}$--theory.
It would also be interesting to treat some mean--value inventory model, in which besides maximizing the  $m_{\lambda}$--expected value of the
total profit to be required to minimize the $m_{\lambda}$--variance  of the total profit (the notion of $m_{\lambda}$--variance has been defined in \cite{dzuche1}).
Defining a notion of mean--absolute deviation in the context of an $m_{\lambda}$--theory would lead to an inventory model in which the risk is eventually represented by this indicator.

Continuing the research line from \cite{li1}, \cite{li2}, in paper \cite{li3} is investigated an inventory problem in which the components of the demand vector are type-$2$ fuzzy variables.
This model is studied with the techniques of Liu's credibility theory \cite{liu1}. It arises naturally a question of extending this model to $m_{\lambda}$--theory, so that giving the parameter $\lambda$ the value
$\frac{1}{2}$  to obtain as a particular case some results of \cite{li3}.

The newsvendor problem is a core concept in inventory management dealing with stochastic demand. Traditionally, it centers on a single goal: either minimizing expected costs or maximizing expected profits.

A mean--variance model for the newsvendor problem is presented in paper \cite{choi1}. A newsvendor problem is studied in which the maximization of expected profit and the minimization of risk, expressed by the profit variance, are required. It would be interesting to formulate and study a newsvendor problem in which the expected profit is expressed by  $m_\lambda$--expected value and the risk of profit by $m_\lambda$--variance (according to Definition 2 of \cite{dzuche1}).

\section{Appendix}

One asks  the question of how the solutions of the optimization problem (4.4) vary depending on the parameter $\lambda$. We will give a solution to this problem in case when
the demands $D_1, \ldots, D_n$ are trapezoidal fuzzy numbers.

\begin{lema}
Assume that $\xi$ is a trapezoidal fuzzy variable. If $\lambda_1 \leq \lambda_2$ then $E_{\lambda_1}(\xi) \leq E_{\lambda_2}(\xi)$.
\end{lema}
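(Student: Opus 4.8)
The plan is to reduce everything to the explicit closed form for $E_{\lambda}(\xi)$ supplied by Proposition 3.4, since for a trapezoidal fuzzy variable we need not return to the integral definition (3.1). Writing $\xi = (r_1, r_2, r_3, r_4)$ with $r_1 \leq r_2 \leq r_3 \leq r_4$, formula (3.7) gives
$$E_{\lambda}(\xi) = (1-\lambda)\frac{r_1 + r_2}{2} + \lambda\frac{r_3 + r_4}{2}.$$
The key observation is that the right-hand side is an \emph{affine} function of $\lambda$. I would therefore regroup the terms so as to isolate the coefficient of $\lambda$, rewriting
$$E_{\lambda}(\xi) = \frac{r_1 + r_2}{2} + \lambda\left(\frac{r_3 + r_4}{2} - \frac{r_1 + r_2}{2}\right).$$
Monotonicity in $\lambda$ will then follow at once from the sign of the slope.

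The only point requiring verification is that this slope is non-negative, and this is exactly where the defining ordering $r_1 \leq r_2 \leq r_3 \leq r_4$ of a trapezoidal fuzzy number is used: from $r_1 \leq r_3$ and $r_2 \leq r_4$ one gets $r_1 + r_2 \leq r_3 + r_4$, hence $\frac{(r_3 + r_4) - (r_1 + r_2)}{2} \geq 0$. With this in hand, for any $\lambda_1 \leq \lambda_2$ in $[0,1]$ I would simply subtract the two expressions to obtain
$$E_{\lambda_2}(\xi) - E_{\lambda_1}(\xi) = (\lambda_2 - \lambda_1)\,\frac{(r_3 + r_4) - (r_1 + r_2)}{2} \geq 0,$$
which is precisely the claimed inequality $E_{\lambda_1}(\xi) \leq E_{\lambda_2}(\xi)$. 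There is no genuine obstacle here; the statement is a one-line consequence of Proposition 3.4, the entire content being the elementary remark that the expected value interpolates linearly between the averages of the two ``low'' endpoints and the two ``high'' endpoints, with the high average dominating the low one by the trapezoidal ordering.
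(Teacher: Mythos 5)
Your proof is correct, but it takes a genuinely different route from the paper: the paper's entire proof of Lemma 8.1 is the citation ``See Proposition 1 of \cite{dzuche1}'', whereas you derive the monotonicity explicitly from Proposition 3.4 by observing that $E_{\lambda}(\xi) = \frac{r_1+r_2}{2} + \lambda\,\frac{(r_3+r_4)-(r_1+r_2)}{2}$ is affine in $\lambda$ with nonnegative slope. What your argument buys is self-containedness and verifiability within the paper itself (modulo Proposition 3.4, which is also imported from \cite{dzuche1}); the algebra is airtight and the use of $r_1\leq r_2\leq r_3\leq r_4$ to get $r_1+r_2\leq r_3+r_4$ is exactly the right point to isolate. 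What the paper's citation presumably buys is generality: monotonicity of $E_{\lambda}$ in $\lambda$ holds for an \emph{arbitrary} fuzzy variable, since $m_{\lambda}(\xi\geq r) = Nec(\xi\geq r) + \lambda\left[Pos(\xi\geq r) - Nec(\xi\geq r)\right]$ is nondecreasing in $\lambda$ (because $Pos\geq Nec$), hence so is the integral (3.1) defining $E_{\lambda}(\xi)$. This distinction is not academic here: in Proposition 8.2 the paper applies Lemma 8.1 to the variables $\frac{1}{D_i}$, which are \emph{not} trapezoidal even when the $D_i$ are, so your proof (and indeed the lemma as literally stated) does not directly cover that application; one would either invoke the general $m_{\lambda}$-monotonicity argument above, or repeat your affine-in-$\lambda$ reasoning on formula (5.1), whose slope $\frac{1}{r_2-r_1}\ln\frac{r_2}{r_1} - \frac{1}{r_4-r_3}\ln\frac{r_4}{r_3}$ is likewise nonnegative since the logarithmic mean is monotone in its arguments.
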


\begin{proof}
See Proposition 1 of \cite{dzuche1}.
\end{proof}

Let $\lambda_1, \lambda_2$ be two parameters in the interval [0, 1]. We consider the two inventory problems with the same input data, but with different objective functions of the optimization
problems, defined by the expected operators $E_{\lambda_1}(\xi)$ and $E_{\lambda_2}(\xi)$, respectively.

We denote by $x_1^\ast, \ldots, x_n^\ast$ the solution of the optimization problem corresponding to $E_{\lambda_1}(\xi)$ and with $y_1^\ast, \ldots, y_n^\ast$ the solution of the optimization problem corresponding to $E_{\lambda_2}(\xi)$.

\begin{propozitie}
Assume that the demands $D_1, \ldots, D_n$ are trapezoidal fuzzy variables. If $\lambda_1 \leq \lambda_2$ then $x_i^\ast \geq y_i^\ast$ for any $i=1, \ldots, n$.

\end{propozitie}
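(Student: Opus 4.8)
The plan is to reduce the claimed inequality between the two optimal order vectors to the monotonicity of the map $\lambda \mapsto E_\lambda(\frac{1}{D_i})$, and then to establish that monotonicity from formula (5.1).

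First I would invoke Proposition 4.1. The two optimization problems share the same input data $d_i,h_i,D_i$ and differ only through the parameter, so their solutions are $x_i^\ast = \frac{d_i}{h_i E_{\lambda_1}(\frac{1}{D_i})}$ and $y_i^\ast = \frac{d_i}{h_i E_{\lambda_2}(\frac{1}{D_i})}$ for each $i$. By hypothesis $d_i \geq 0$ and $h_i > 0$, and Lemma 3.2 gives $E_{\lambda_1}(\frac{1}{D_i}) > 0$ and $E_{\lambda_2}(\frac{1}{D_i}) > 0$ since $\frac{1}{D_i} > 0$. As the map $t \mapsto \frac{d_i}{h_i t}$ is nonincreasing on $(0,\infty)$, the inequality $x_i^\ast \geq y_i^\ast$ is equivalent to $E_{\lambda_1}(\frac{1}{D_i}) \leq E_{\lambda_2}(\frac{1}{D_i})$. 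Hence the statement reduces to showing that $E_\lambda(\frac{1}{D_i})$ is nondecreasing in $\lambda$.

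The key point, which is also the main obstacle, is that Lemma 8.1 cannot be quoted directly: it concerns a trapezoidal variable, whereas $\frac{1}{D_i}$ is the reciprocal of a trapezoidal number and is not itself trapezoidal. Instead I would use the explicit formula (5.1). Fixing $i$ and writing $D_i = (r_1,r_2,r_3,r_4)$, set $A = \frac{1}{r_2-r_1}\ln\frac{r_2}{r_1}$ and $B = \frac{1}{r_4-r_3}\ln\frac{r_4}{r_3}$; then (5.1) reads $E_\lambda(\frac{1}{D_i}) = \lambda A + (1-\lambda)B$, so $E_{\lambda_2}(\frac{1}{D_i}) - E_{\lambda_1}(\frac{1}{D_i}) = (\lambda_2-\lambda_1)(A-B)$. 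Since $\lambda_2 - \lambda_1 \geq 0$, it suffices to prove $A \geq B$.

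Finally, $A \geq B$ is precisely a monotonicity of divided differences of $\ln$. Each quantity is the slope of a secant of $\ln$: by the mean value theorem $A = \frac{1}{\xi}$ for some $\xi \in (r_1,r_2)$ and $B = \frac{1}{\eta}$ for some $\eta \in (r_3,r_4)$, and the trapezoidal ordering $r_1 \leq r_2 \leq r_3 \leq r_4$ forces $\xi < r_2 \leq r_3 < \eta$, whence $\frac{1}{\xi} \geq \frac{1}{\eta}$, i.e. $A \geq B$. Equivalently, this is just the concavity of $\ln$, whose secant slopes decrease as the secant moves to the right, so the degenerate cases $r_1 = r_2$ or $r_3 = r_4$ are covered in the limit. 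Combining the three steps yields $E_{\lambda_1}(\frac{1}{D_i}) \leq E_{\lambda_2}(\frac{1}{D_i})$ and therefore $x_i^\ast \geq y_i^\ast$ for every $i$. As the conceptual reason behind this computation, I would also remark that because $Pos \geq Nec$ the measure $m_\lambda(A) = \lambda Pos(A) + (1-\lambda)Nec(A)$ is nondecreasing in $\lambda$ for every event $A$; through the integral representation of Lemma 3.2 this already makes $\lambda \mapsto E_\lambda(\zeta)$ nondecreasing for any positive fuzzy variable $\zeta$, trapezoidal or not.
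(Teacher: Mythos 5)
Your proof is correct, and it departs from the paper's exactly at the step you flag. The paper's own proof has the same skeleton as your first paragraph---write $x_i^\ast$ and $y_i^\ast$ via Proposition 4.1, reduce the claim to $E_{\lambda_1}(\frac{1}{D_i}) \leq E_{\lambda_2}(\frac{1}{D_i})$, then invert---but it justifies that inequality by citing Lemma 8.1 (i.e.\ Proposition 1 of \cite{dzuche1}) applied directly to $\frac{1}{D_i}$. As you observe, $\frac{1}{D_i}$ is not a trapezoidal fuzzy variable (its membership function $x \mapsto \mu_{D_i}(1/x)$ is not piecewise linear), so Lemma 8.1 as stated in the paper does not literally cover it; your substitute argument closes this gap using only material already proved in Section 5: formula (5.1) exhibits $E_\lambda(\frac{1}{D_i}) = \lambda A + (1-\lambda)B$ as affine in $\lambda$ with slope $A - B$, and $A \geq B$ follows from the mean value theorem (equivalently, concavity of $\ln$ forces secant slopes to decrease to the right). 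Your closing remark is in fact the cleanest route of all: since the membership functions are normalized, $Pos \geq Nec$, so $m_\lambda$ is pointwise nondecreasing in $\lambda$, and the representation $E_\lambda(\zeta) = \int_0^\infty m_\lambda(\zeta \geq r)\,dr$ of Lemma 3.2 makes $\lambda \mapsto E_\lambda(\zeta)$ nondecreasing for \emph{any} positive fuzzy variable $\zeta$; this general fact is presumably what underlies the cited result of \cite{dzuche1}, and either of your two arguments renders the appendix self-contained where the paper's appeal to Lemma 8.1 is, strictly speaking, out of scope.
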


\begin{proof}
Assume that $\lambda_1 \leq \lambda_2$. By Proposition 4.4, solutions $x_1^\ast, \ldots, x_n^\ast$ and $y_1^\ast, \ldots, y_n^\ast$ are written in the following form:

(8.1) $x_i^\ast=\frac{d_i}{h_iE_{\lambda_1}(\frac{1}{D_i})}$ for $i=1, \ldots, n$.

(8.2) $y_i^\ast=\frac{d_i}{h_iE_{\lambda_2}(\frac{1}{D_i})}$ for $i=1, \ldots, n$.

Applying Lemma 8.1 for any $i=1, \ldots, n$ the following implications hold:

(8.3) $\lambda_1 \leq \lambda_2 \Rightarrow E_{\lambda_1}(\frac{1}{D_i}) \leq E_{\lambda_2}(\frac{1}{D_i}) \Rightarrow \frac{1}{E_{\lambda_2}(\frac{1}{D_i})} \leq \frac{1}{E_{\lambda_1}(\frac{1}{D_i})}$

By (8.1)-(8.3) for any $i=1, \ldots, n$ we will have:

$x_i^\ast-y_i^\ast=\frac{d_i}{h_i}(\frac{1}{E_{\lambda_2}(\frac{1}{D_i})} -\frac{1}{E_{\lambda_1}(\frac{1}{D_i})} )\geq 0$.

We conclude that $x_i^\ast \geq y_i^\ast$ for any $i=1, \ldots, n$.
\end{proof}

\section*{Acknowledgement}

The author would like to express her gratitude to the editor and the reviewers for their valuable suggestions in obtaining the final version of the paper.

\end{document}